\newtheorem{lem}{Lemma}
\newtheorem{cor}  [lem]{Corollary}
\newtheorem{ex}    [lem]{Example}
\newtheorem{prop}[lem]{Proposition}
\newtheorem{theo}    [lem]{Theorem}
\newtheorem{que} [lem]{Question}
\newtheorem*{theoremA}{Theorem A}
\newtheorem*{theoremB}{Theorem B}
\newcommand{\F}{{\mathbb F}}
\newcommand{\N}{{\mathbb N}}
\newcommand{\Z}{{\mathbb Z}}
\def\syl#1#2{{\rm Syl}_#1(#2)}
\def\nor{\trianglelefteq\,}
\let\phi=\varphi
\title{A cohomological criterion for $p$-solvability}
\author{Jon Gonz\'alez-S\'anchez}
\address{Departamento de Matem\'aticas,%
Universidad de País Vasco,
Apartado 644, 
48080 Bilbao 
Spain.}
\email{jon.gonzalez@ehu.es}
\author{Joan Tent}
\address{Departament de Matem\`atiques, 
Facultat de Matem\`atiques, 
Universitat de Val\`encia, 
Spain.}
\email{ joan.tent@uv.es}
\keywords{Cohomology, $p$-solvability criterion, $p$-length}
\subjclass[2010]{20D10}
\thanks{The first author was supported by grants MTM2011-28229-
C02-01 and  MTM2014-53810-C2-2-P, from the Spanish Ministry of Economy and Competitivity,  the Ramon y Cajal Programme of the Spanish Ministry of Science and Innovation, grant RYC-2011-08885, and by the Basque Government, grants IT753-13 and IT974-16. \\ \indent Second author is supported by grants MTM2011-28229-
C02-01 and  MTM2014-53810-C2-2-P, from the Spanish Ministry of Economy and Competitivity, and by Prometeo II/Generalitat Valenciana.}
\begin{document}

\begin{abstract}
Let $G$ be a finite group,  $p$ a prime  and $P$ a Sylow $p$-subgroup  of $G$. In this note we give a cohomological criterion for the $p$-solvability of $G$ depending on  the cohomology in degree $1$
  with coefficients in $\mathbb F_p$  of both the normal subgroups of $G$ and $P$.
As a byproduct we bound the minimal number of quotients of order a power
of $p$ appearing in any normal series of $G$ by the number of generators of $P$.
\end{abstract}

\maketitle

\section{Introduction}

Let $G$ be a finite group and $p$ an arbitrary fixed prime integer.
We denote the $i$-th cohomology and $i$-th homology
groups of $G$ with coefficients in the field $\mathbb F_p$ of $p$ elements by $H^i(G)=H^i(G, \F_p)$ and $H_i(G)=H_i(G, \F_p)$ respectively,
where the action considered is the trivial one.
The first cohomology group is naturally isomorphic to the group of homomorphisms from $G$ into $\mathbb F_p$, that is

$$
H^1(G) \cong {\rm Hom} (G, \mathbb F_p)\, ,
$$  and

$$
H_1(G) \cong H^1(G)^* \cong G/G^p[G,G],
$$ where $A^*$ denotes the dual group of an abelian group $A$,
and $G^p$ is the subgroup of $G$ generated by the $p$-powers of the elements of $G$.

Suppose now that $P$ is a Sylow $p$-subgroup of $G$. Since $P$ has index in $G$
coprime to $p$, the restriction map from $H^1(G)$ into $H^1(P)$
defines an injective group homomorphism, and it is the content of Tate's $p$-nilpotency criterion \cite{Tate} that this injection is a group
isomorphism if and only if $G$ has a normal $p$-complement. In this note, we present
a cohomological criterion for a finite group to be $p$-solvable based on Tate's characterization.
In order to state our main result, we first need to introduce some notation.
If $N\nor G$ and $K\leq G$, then $ K$ acts in a natural way
into $H^1(N)$, and we denote by
 $H^1(N)^K$ the subgroup of fixed points of this action.
 Let  $O^p(G)$ be the smallest  normal subgroup of $G$ such that
$G/O^p(G)$ is a $p$-group, so $H^1(G)\cong H^1(G/O^p(G))$. Similarly, $O^{p'}(G)$
is defined as the minimal normal subgroup of $G$ such that $G/O^{p'}(G)$ has order coprime to $p$.

\begin{theoremA}
Let $G$ be a finite group, $p$ a prime number and $P\in\syl p G$. Write
$M_1=O^{p^\prime} (G)$ and $M_i=O^{p^\prime} (O^p(M_{i-1}))$ for $i\geq 2$. Then the following two conditions are equivalent:
\begin{enumerate}
\item $G$ is $p$-solvable.
\item $H^1(P)\cong \oplus_{i\in \N} H^1(M_i)^P$.\label{cond2}
\end{enumerate}
\end{theoremA}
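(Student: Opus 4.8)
The plan is to study $H^1(P)$ through the descending chain $Q_i:=P\cap M_i$ inside $P$. Since each $M_i$ is characteristic in a normal subgroup of $G$, it is normal in $G$, so each $Q_i\nor P$; moreover $Q_1=P$ (as $P\le O^{p'}(G)=M_1$), and $Q_{i+1}=P\cap O^p(M_i)\le Q_i$. Because $Q_i\in\syl p{M_i}$ it maps onto the $p$-group $M_i/O^p(M_i)$ with kernel $Q_i\cap O^p(M_i)=Q_{i+1}$, so there is a $P$-equivariant isomorphism $Q_i/Q_{i+1}\cong M_i/O^p(M_i)$; hence $H^1(M_i)=H^1(M_i/O^p(M_i))$ and $H^1(M_i)^P\cong H^1(Q_i/Q_{i+1})^P$. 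I would first record the elementary fact that $G$ is $p$-solvable if and only if $M_i=1$ (equivalently $Q_i=1$) for $i$ large: the descending normal series $G\ge M_1\ge O^p(M_1)\ge M_2\ge\cdots$ has alternately $p'$- and $p$-factors, so it reaches $1$ precisely when every composition factor of $G$ is a $p$- or $p'$-group, while if it stabilises at $M_\infty\ne1$ then $M_\infty=O^p(M_\infty)=O^{p'}(M_\infty)$, forcing a non-$p$-solvable chief factor of $G$.

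Next I would introduce the Lyndon--Hochschild--Serre filtration. Regarding $F_i:=\mathrm{Hom}(P/Q_i,\F_p)\subseteq\mathrm{Hom}(P,\F_p)=H^1(P)$ via inflation, one gets $0=F_1\le F_2\le\cdots$, and the five-term exact sequence of $1\to Q_i/Q_{i+1}\to P/Q_{i+1}\to P/Q_i\to1$ identifies $F_{i+1}/F_i$ with $\ker\big(\delta_i\colon H^1(Q_i/Q_{i+1})^{P/Q_i}\to H^2(P/Q_i,\F_p)\big)$, the transgression. Since $H^1(Q_i/Q_{i+1})^{P/Q_i}=H^1(Q_i/Q_{i+1})^P\cong H^1(M_i)^P$, the isomorphism $H^1(P)\cong\bigoplus_iH^1(M_i)^P$ is equivalent to the conjunction of \emph{(a)} the filtration is exhaustive, i.e. $Q_i=1$ for $i$ large, and \emph{(b)} every $\delta_i$ vanishes. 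By the observation above, (a) is exactly $p$-solvability; so the whole theorem reduces to showing that $p$-solvability forces all $\delta_i=0$, and conversely that non-$p$-solvability obstructs the isomorphism.

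For the converse direction I would argue by induction on $|G|$ via Tate's $p$-nilpotency criterion. Suppose $G$ is not $p$-solvable, let $M_\infty\ne1$ be the stable term and $Q_\infty=P\cap M_\infty$; then $Q_\infty\ne1$ (else $M_\infty$ is a $p'$-group with $O^{p'}(M_\infty)=M_\infty$, hence trivial), and $G/M_\infty$ is $p$-solvable with chain $M_i/M_\infty$. Since $M_\infty\le O^p(M_i)$ one has $H^1(M_i)=H^1(M_i/M_\infty)$, so by induction $H^1(P/Q_\infty)\cong\bigoplus_iH^1(M_i/M_\infty)^{P/Q_\infty}=\bigoplus_iH^1(M_i)^P$. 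Now set $X:=M_\infty P\le G$: here $P\in\syl pX$, $M_\infty\nor X$, and $O^p(X)=M_\infty$ because $X/M_\infty\cong P/Q_\infty$ is a $p$-group while $O^p(M_\infty)=M_\infty\le O^p(X)$; hence $H^1(X)\cong H^1(P/Q_\infty)$ and $\mathrm{res}\colon H^1(X)\to H^1(P)$ is the inflation, an isomorphism iff $Q_\infty\le\Phi(P)$. By Tate this occurs iff $X$ has a normal $p$-complement, i.e. $M_\infty=O^p(X)$ is a $p'$-group, i.e. $Q_\infty=1$ — which fails. Therefore $H^1(P/Q_\infty)\subsetneq H^1(P)$, so $\bigoplus_iH^1(M_i)^P\not\cong H^1(P)$.

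The remaining, and main, step is the vanishing of all $\delta_i$ when $G$ is $p$-solvable. I would first reduce to $G=O^{p'}(G)$ (so $M_1=G$ and $P\le G$), which changes neither the chain, nor $H^1(P)$, nor $p$-solvability; then induct on $|G|$: for $p$-solvable $G\ne1$ the subgroup $L:=O^p(G)$ is a \emph{proper} normal subgroup whose Sylow $p$-subgroup is $Q_2=P\cap L$ and whose chain is $(M_i)_{i\ge2}$, so induction governs $H^1(Q_2)$ and the transgressions below the first step. The link between these levels and $H^1(P)$ would come again from Tate's criterion, applied this time to the intermediate groups $M_iP=PO^p(M_i)$, which satisfy $O^p(M_iP)=O^p(M_i)$ and $M_iP=PO^p(M_i)$; these interpolate the cohomology of $P$ with that of the $Q_i$'s and let the vanishing propagate up the chain. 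I expect this to be the real obstacle: one must show that every $P$-invariant homomorphism $P\cap M_i\to\F_p$ extends to $P$, a statement that is \emph{false} for a general normal subgroup of a general group (there are $Q_8$-type counterexamples) and which therefore has to use essentially both Tate's criterion and the hypothesis $O^p(M_i)\ne M_i$ — that is, $p$-solvability — carried through the induction. Granting this, the theorem follows, and the byproduct $\dim_{\F_p}H^1(P)=\sum_i\dim_{\F_p}H^1(M_i)^P$ bounds the number of $p$-power factors in the chain by the number of generators of $P$.
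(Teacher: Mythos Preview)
Your overall architecture---the filtration $Q_i=P\cap M_i$, the identification $H^1(M_i)^P\cong H^1(Q_i/Q_{i+1})^P$, the reduction of condition~(2) to exhaustiveness plus vanishing of the transgressions $\delta_i$---matches the paper's exactly, and your argument for $(2)\Rightarrow(1)$ is essentially the paper's (they invoke their Lemma~1 where you invoke Tate, but the content is the same).

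The genuine gap is precisely where you flag it: the vanishing of the transgressions in the $p$-solvable case. You point to the groups $M_iP=L_iP$ (where $L_i=O^p(M_i)$) and to ``Tate's criterion'', but never articulate a mechanism; and the induction you sketch, passing to $L=O^p(G)$ with Sylow $Q_2$, gives information about $Q_2$-invariants and about transgressions into $H^2(Q_2/Q_i)$, not about the $P$-invariants and $H^2(P/Q_i)$ you actually need. The paper's resolution is a short lemma extracted from the proof of Tate's criterion. Set $N=L_i$, so $O^p(N)=N$ and hence $H^1(N)=0$; write $M=N\cap P=Q_{i+1}$. Compare the five-term sequences for the extensions
\[
1\to N\to NP\to NP/N\to 1 \qquad\text{and}\qquad 1\to M\to P\to P/M\to 1.
\]
Since $H^1(N)=0$, the inflation $H^2(NP/N)\to H^2(NP)$ is injective; composing with the isomorphism $H^2(NP/N)\cong H^2(P/M)$ and the injective restriction $H^2(NP)\to H^2(P)$ (index prime to $p$), one sees that the inflation $H^2(P/M)\to H^2(P)$ is injective. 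Hence the transgression $H^1(M)^P\to H^2(P/M)$ is zero, i.e.\ restriction $H^1(P)\to H^1(Q_{i+1})^P$ is surjective. This is the paper's Lemma~1; once you have it, the exact sequences
\[
0\longrightarrow H^1(P/Q_{i+1})\longrightarrow H^1(P)\longrightarrow H^1(Q_{i+1})^P\longrightarrow 0
\]
for successive $i$ telescope (or, equivalently, your $\delta_i$ all vanish) and $(1)\Rightarrow(2)$ follows. So the ``real obstacle'' dissolves as soon as one looks at the \emph{ambient} normal subgroup $L_i$ rather than at its trace $Q_{i+1}$ on $P$: it is the $p$-perfection of $L_i$, giving $H^1(L_i)=0$, that forces the transgression to vanish.
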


Observe that it is clear that condition (\ref{cond2}) in Theorem A is equivalent to saying that the groups claimed to be isomorphic have the same dimension as $\mathbb F_p$-vector spaces.

\medskip

Recall that the $p$-length in a $p$-solvable group is defined as the minimal number of quotients of order a power
of $p$ appearing in any normal series of the group.
Note that an immediate consequence of Theorem A is the well-known fact that if $G$ is $p$-solvable,
then the number of generators of a Sylow $p$-subgroup $P$ of $G$ is greater than or equal to the
$p$-length of $G$ (see Huppert's Hauptsatz 6.6 in \cite{Hup}).  We  may extend 
the definition of $p$-length 
to any finite group $G$, by saying that
the \textbf{$p$-length} of $G$ is equal to the minimal number of quotients of order a power
of $p$ appearing in any normal series of $G$. Then we can generalize
Huppert's result as follows:

\begin{theoremB}
Let $G$ be a finite group,  $p$ a prime and
$P\in\syl{p}{G}$. Suppose that $d$ is the number of generators of $P$, and that
$l$ is the $p$-length of $G$.
Then $l\leq d$. 
\end{theoremB}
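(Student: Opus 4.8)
The plan is to reduce Theorem B to Theorem A by passing to a suitable quotient. The key observation is that the $p$-length of $G$ — the minimal number of $p$-quotients in a normal series — is governed entirely by the largest $p$-solvable "section" sitting at the bottom, because any normal series of $G$ with all non-$p$ factors must have those factors $p'$-by-(something), and we can compress. More precisely, I would first argue that if $l$ is the $p$-length of $G$, then there is a normal subgroup $N \nor G$ with $G/N$ a $p$-group (possibly trivial) and $N$ having a normal series with exactly $l$ factors of $p$-power order and all other factors of order coprime to $p$; in fact, by absorbing the trivial factors, we may take the chain $M_1 = O^{p'}(G) \supseteq O^p(M_1) \supseteq M_2 = O^{p'}(O^p(M_1)) \supseteq \cdots$ defined exactly as in Theorem A. The point is that this canonical chain realizes the $p$-length: the $p$-length of $G$ equals the number of indices $i$ for which $O^p(M_i) \ne M_i$ before the chain stabilizes (i.e., the number of genuinely non-trivial $p$-quotients $M_i / O^p(M_i)$ appearing).

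Next I would handle the $p$-solvable case using Theorem A directly: if $G$ is $p$-solvable, then condition (2) of Theorem A gives $H^1(P) \cong \bigoplus_{i \in \N} H^1(M_i)^P$, and since $\dim_{\F_p} H^1(P) = d$ (the number of generators of $P$, by the isomorphism $H^1(P) \cong \mathrm{Hom}(P, \F_p)$ and Burnside basis), we get $d = \sum_i \dim_{\F_p} H^1(M_i)^P$. Now I claim each non-trivial $p$-quotient in the canonical chain contributes at least $1$ to this sum: whenever $M_i \ne O^p(M_i)$, the quotient $M_i/O^p(M_i)$ is a non-trivial $p$-group, so $H^1(M_i/O^p(M_i)) = H^1(M_i) \ne 0$, and one checks $H^1(M_i)^P \ne 0$ using that $P$ acts on the non-trivial $\F_p$-vector space $H^1(M_i)$ through a $p$-group (as $P$ normalizes $M_i$ and acts trivially on $M_i / O^p(M_i) \cdot$ something — here one needs that the image of $P$ in $\aut{M_i/O^p(M_i)}$ is a $p$-group, hence has a non-zero fixed point on the $\F_p P$-module $H^1(M_i)$). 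Summing, $l \le d$ in the $p$-solvable case.

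For the general case, the idea is to pass to a quotient of $G$ that is $p$-solvable and has the same $p$-length. Let $L$ be a normal subgroup of $G$ that is "as $p'$-like as possible" from above — concretely, iterate $G \supseteq O^p(G) \supseteq O^{p'}(O^p(G)) \supseteq \cdots$; eventually one reaches a term $R$ (the $p$-residual/perfect-like core) for which $O^p(R) = R$ and $O^{p'}(R) = R$, i.e. $R$ has no non-trivial $p$- or $p'$-quotient. Then $G/R$ is $p$-solvable, a Sylow $p$-subgroup of $G/R$ is an epimorphic image of $P$ hence generated by at most $d$ elements, and the normal series of $G$ descending to $R$ shows the $p$-length of $G$ equals the $p$-length of $G/R$ (the factor $R$ contributes no $p$-quotients, being perfect modulo each of its relevant subgroups). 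Applying the $p$-solvable case to $G/R$ yields $p\text{-length}(G) = p\text{-length}(G/R) \le d(\overline P) \le d$.

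The main obstacle I anticipate is the bookkeeping in the last paragraph: verifying carefully that quotienting by $R$ does not decrease the $p$-length (equivalently, that $R$ itself can be chosen so that it sits inside a normal series of $G$ with \emph{no} $p$-power factors, which forces $l(G) = l(G/R)$), and simultaneously that it does not increase the number of generators of the Sylow subgroup (this direction is easy, since generators only drop under quotients). Getting the canonical chain $M_i$ of Theorem A to line up exactly with an optimal normal series computing $l$ — rather than merely bounding it — is the delicate point; the cleanest route is probably to first prove $l(G) = l(G/R)$ with $R$ as above, then observe $G/R$ is $p$-solvable with the $M_i$-chain of $G/R$ stabilizing after exactly $l(G/R)$ non-trivial $p$-steps, and finally invoke Theorem A for $G/R$.
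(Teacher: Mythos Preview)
Your treatment of the $p$-solvable case via Theorem~A is correct: the decomposition $H^1(P)\cong\bigoplus_i H^1(M_i)^P$ together with the fact that a $p$-group acting on a non-zero $\F_p$-space has a non-zero fixed vector gives $d\ge l$ at once.

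The paper, however, does not argue Theorem~B this way and does not invoke Theorem~A at all. Instead it introduces \emph{$p$-perfect filtrations}: normal series $G=N_0\ge N_1\ge\cdots\ge N_r=1$ in which each $N_i$ (for $i\ge 1$) satisfies $O^p(N_i)=N_i$. From Lemma~\ref{lemma} one gets that the maps $H_1(P\cap N_i)_P\to H_1(P\cap N_j)_P$ are injective for $i\ge j$, and are isomorphisms exactly when $p\nmid|N_j:N_i|$; consequently the number of factors of such a filtration with order divisible by $p$ is at most $\dim H_1(P)=d$. Theorem~B then follows by constructing one particular $p$-perfect filtration (interleaving the descending $O^{p'}/O^p$ chain with non-$p$-solvable chief factors) whose number of $p$-divisible factors dominates the $p$-length of $G$.

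Your reduction to the $p$-solvable quotient $G/R$ is where the approaches genuinely diverge, and where your plan has a real gap. The claim $l(G)=l(G/R)$ fails in general, because the $p$-solvable residual $R$ may itself contain $p$-group chief factors that any normal series of $G$ is forced to cross. Concretely, take $G=\mathrm{SL}_2(5)$ and $p=2$: here $R=G$, so $G/R=1$ has $p$-length $0$, but every normal series of $G$ must pass through $Z(G)\cong C_2$, producing a $2$-power factor, so $l(G)=1$. Your parenthetical justification (``$R$ sits inside a normal series of $G$ with no $p$-power factors'') is exactly what breaks here. The $p$-content buried inside $R$ has to be accounted for, and that is precisely what the paper's $p$-perfect filtration does: it threads the non-$p$-solvable chief factors of $R$ into the series while keeping each term $p$-perfect, so that the homological bound from Lemma~\ref{lemma} applies at every step.
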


Another result of Huppert states that for $p$ and odd prime, the number of non-$p$-solvable chief factors of a finite group is bounded by the number of generators of a Sylow $p$-subgroup of the group (see Satz 2.3 of \cite{Hup2}). We will call the number of such chief factors the {\bf non-$p$-solvable length} of the group, noting that this definition is slightly different from the non-$p$-solvable length introduced by E. I. Khukhro  and P. Shumyatsky \cite{Khu-Sum}. Consider
a normal series of a group $G$ whose quotients are either non-$p$-solvable chief factors, $p$-groups or $p^\prime$-groups.
Then we define the {\bf generalized $p$-length} of $G$ as  the smallest possible number of quotients
of order divisible by $p$ appearing in such a series. It is then clear that
the generalized $p$-length of a finite group is bounded by twice
the number $d$ of generators of a Sylow $p$-subgroup of the group.
Clearly, if $G$ is either $p$-solvable or a group with no $p$-solvable composition factors,
then we do not recover, from  the bound $l\leq 2d$ on  the generalized $p$-length $l$
of $G$, the above stated bounds, respectively on the $p$-length and on the non-$p$-solvable length.
Although such a general bound for the generalized $p$-length cannot be obtained,
we have not
been able to set whether our estimation is the best possible of its kind (see Section \ref{last}).

\section{Tate's $p$-nilpotency criterion and a lemma}

The proof of our Theorem A in the Introduction relies on Tate's $p$-nilpotency criterion for finite groups \cite{Tate}.
In this section, we briefly sketch a proof of Tate's well-known result with  a 
small variation on the original arguments, and we also state
some useful consequences of the proof.

\medskip

Fix a prime $p$.
 As before, let $G$ be a finite group and $P$ a Sylow $p$-subgroup of $G$.
Tate's criterion stablishes that the restriction map
$$H^1(G)\longrightarrow H^1(P)$$ is a  group  isomorphism if and only if $G$ has a normal $p$-complement. In order to prove this, let
$N=O^p(G)$.
In particular, note that $N$ has no proper quotients of order a power of $p$, that is $O^p(N)=N$, and thus
$H^1(N)=0$. Write  $M=N\cap P$, so the natural inclusion gives a natural isomorphism from $P/M$ into $G/N$.
Then we have the following commutative diagram, where all the arrows are natural:

\begin{equation}
 \xymatrix{
 1 \ar[r] & M \ar[r]\ar[d] &P\ar[r]\ar[d] & P/M\ar[d]\ar[r] &1 \\
 1 \ar[r] & N \ar[r] &G\ar[r] & G/N\ar[r] &1. \\
 }
\end{equation}
Now the inflation-restriction-transgression exact sequence in cohomology (see \cite[Corollary 7.2.3]{Ev})
leads to the following commutative diagram:
 \begin{equation}
 \label{inf-res-tras}
 \xymatrix{
 0 \ar[r] & H^1(G/N) \ar[r]^{\alpha}\ar[d]^{\iota_1} \ar[d]&H^1(G)\ar[r]^{\beta} \ar[d]^{\iota_2}& H^1(N)^G\ar[r]^{\gamma}\ar[d]^{\iota_3}& H^2(G/N)\ar[r]^{\delta} \ar[d]^{\iota_4}&H^2(G) \ar[d]^{\iota_5}\\
  0 \ar[r] & H^1(P/M) \ar[r]^{\alpha^\prime} &H^1(P)\ar[r]^{\beta^\prime} & H^1(M)^P\ar[r]^{\gamma^\prime}& H^2(P/M)\ar[r]^{\delta^\prime} &H^2(P).
}
\end{equation}
Observe that since $H^1(N)=0$, we have that $\delta$ is a monomorphism. Recall that $\iota_5$ is a monomorphism (because $P$ has index coprime to $p$ in
$G$) and $\iota_4$ is an isomorphism, which implies that $\delta^\prime$ is also a monomorphism.
Therefore $\gamma^\prime$ is the null map and $\beta^\prime$ is surjective.
On the other hand, note that both $\alpha$ and $\iota_1$ are isomorphisms, and by hypothesis $\iota_2$ is also an isomorphism.
Hence $\alpha^\prime$ is an isomorphism and $\beta^\prime$ is the null map.

\medskip

Note that the two observations in the previous paragraph lead to the fact that $H^1(M)^P=0$.
 But if $M$ is non-trivial, since $P$ is a $p$-group, then $H^1(M)^P$ is non-trivial.
This implies that $N$ is a normal $p$-complement of $G$, which is the conclusion of Tate's result.

\medskip

In the above setting, if we drop the hypothesis that $H^1(G)\to H^1(P)$ is an isomorphism,
we conclude the following, which can also be deduced from \cite{Tate} or \cite{Thompson}:

\begin{lem}\label{lemma}
Let $G$ be a finite group and $P\in\syl{p}{G}$. Suppose that $N$ is a normal subgroup of $G$ such that $N=O^p(N)$,
and write $M=N\cap P$. Then the following sequence is exact:
 \begin{equation}
 \xymatrix{
  0 \ar[r] & H^1(P/M) \ar[r]^{\alpha^\prime} &H^1(P)\ar[r]^{\beta^\prime} & H^1(M)^P\ar[r]& 0.
}
\end{equation}
Equivalently, the following sequence is exact:
 \begin{equation}
 \xymatrix{
  0 \ar[r] & H_1(M)_P \ar[r]^{(\beta^\prime)^*} &H_1(P)\ar[r]^{(\alpha^\prime)^*} & H_1(P/M)\ar[r]& 0.
}
\end{equation}
\end{lem}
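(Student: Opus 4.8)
The plan is to follow the argument sketched for Tate's criterion, but without assuming that $\iota_2\colon H^1(G)\to H^1(P)$ is an isomorphism. First I would reproduce the commutative diagram \eqref{inf-res-tras} with $N=O^p(N)$ in place of $O^p(G)$. The hypothesis $N=O^p(N)$ gives $H^1(N)=0$, exactly as before, so the top row of \eqref{inf-res-tras} still has $\delta$ injective, and $M=N\cap P$ maps isomorphically onto a Sylow $p$-subgroup of $G/N$ so $\iota_1$ and $\iota_4$ are isomorphisms and $\alpha,\alpha'$ are injective. The key point, which does not use $\iota_2$ at all, is the argument that $\iota_5$ is a monomorphism (index coprime to $p$) together with $\iota_4$ an isomorphism forces $\delta'$ to be a monomorphism, whence $\gamma'$ is the zero map. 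From the exactness of the bottom row of \eqref{inf-res-tras} it then follows that $\beta'$ is surjective and $\alpha'$ is injective with image exactly $\ker\beta'$, which is precisely the asserted short exact sequence
\[
0 \longrightarrow H^1(P/M) \xrightarrow{\ \alpha'\ } H^1(P) \xrightarrow{\ \beta'\ } H^1(M)^P \longrightarrow 0.
\]

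The second displayed sequence is obtained by dualizing. Since all the groups involved are finite-dimensional $\F_p$-vector spaces, applying $\mathrm{Hom}(-,\F_p)$ to an exact sequence of such gives an exact sequence, and under the identifications $H_1(-)\cong H^1(-)^*$ and $(H^1(M)^P)^*\cong H_1(M)_P$ (the coinvariants, dual to the invariants) one reads off
\[
0 \longrightarrow H_1(M)_P \xrightarrow{\ (\beta')^*\ } H_1(P) \xrightarrow{\ (\alpha')^*\ } H_1(P/M) \longrightarrow 0.
\]
One should check that the natural map $H^1(N)^G \to H^1(M)^P$ appearing in \eqref{inf-res-tras} is compatible with taking $P$-invariants on the target and that dualizing $H^1(M)^P$ indeed yields the $P$-coinvariants of $H_1(M)$; this is a standard fact about finite $p$-groups acting on finite-dimensional $\F_p$-modules, since invariants and coinvariants are interchanged under duality.

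The main obstacle — really the only subtlety — is verifying that the relevant portion of the inflation–restriction–transgression sequence is genuinely available and exact in the five-term form used, and that the vertical maps $\iota_j$ behave as claimed: that $\iota_4$ is an isomorphism and $\iota_5$ a monomorphism. These both follow from the fact that $P/M\in\syl p{G/N}$ and $P\in\syl pG$, so that restriction to a Sylow $p$-subgroup is injective on cohomology in every degree, and is moreover an isomorphism onto the stable elements; in the present low degrees the "stable element" condition is automatically satisfied for $H^2(G/N)$ because $G/N$ is a $p$-group (so $\iota_4$ is an isomorphism), while for $H^2(G)$ we only need injectivity. Everything else is diagram chasing in \eqref{inf-res-tras}, identical to the computation already carried out for Tate's criterion but with the line "by hypothesis $\iota_2$ is an isomorphism" deleted.
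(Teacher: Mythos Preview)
Your argument has a genuine gap: you assert that $G/N$ is a $p$-group, and this is what you use to conclude that $\iota_4\colon H^2(G/N)\to H^2(P/M)$ is an isomorphism. But the hypothesis is only $N=O^p(N)$, not $N=O^p(G)$; the former says nothing about the quotient $G/N$. For instance, with $p=2$, $G=A_4$ and $N=1$ one has $N=O^2(N)$ trivially, yet $G/N\cong A_4$ is not a $2$-group and $\iota_4\colon H^2(A_4,\F_2)\to H^2(V_4,\F_2)$ is a proper injection (dimensions $1$ and $3$). Without surjectivity of $\iota_4$ the commutativity $\delta'\circ\iota_4=\iota_5\circ\delta$ only tells you that $\delta'$ is injective on the image of $\iota_4$, which is not enough to force $\gamma'=0$.

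The paper closes this gap with a single extra move: it replaces $G$ by $PN$ before writing down the diagram. Nothing in the conclusion refers to $G$ itself (only to $P$, $M$, and $N$), $N$ is still normal in $PN$, and $P$ is still a Sylow $p$-subgroup of $PN$; but now the quotient $PN/N\cong P/M$ \emph{is} a $p$-group, so $\iota_4$ is literally the identity and your chase goes through verbatim. Once you insert this reduction at the start, the rest of your write-up --- including the dualization to the homology statement --- is correct and matches the paper's approach.
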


\begin{proof}
Consider the diagram \eqref{inf-res-tras} for the group $G=PN$. In that situation $\delta$ is injective and $\iota_4$ is an isomorphism.
This implies that $\gamma^\prime$ is is the null map and the lemma follows. \end{proof}

The following well-known fact is a direct consequence of the previous lemma.

\begin{cor}\label{Tate}
Let $G$ be a finite group and $P$ a Sylow $p$-subgroup of $G$.
Suppose that $N$ is a normal subgroup of $G$ such that $P\cap N\leq \Phi(P)$.
Then $N$ is $p$-nilpotent.
\end{cor}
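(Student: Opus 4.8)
The plan is to apply Lemma \ref{lemma} not to $N$ itself but to the characteristic subgroup $K:=O^p(N)$. First I would record that $K$ is characteristic in $N$ and hence normal in $G$, that it satisfies $O^p(K)=K$, and that $P\cap K$ is a Sylow $p$-subgroup of $K$; so Lemma \ref{lemma} applies to $K$ (working, if one prefers, inside the subgroup $PK$, which has $P$ as a Sylow $p$-subgroup and in which $K$ is normal because $P$ normalises $N$). Set $M:=K\cap P$. Since $K\leq N$, the hypothesis gives
$$M=K\cap P\leq N\cap P\leq\Phi(P).$$
Thus Lemma \ref{lemma} furnishes an exact sequence $0\to H^1(P/M)\xrightarrow{\alpha'}H^1(P)\xrightarrow{\beta'}H^1(M)^P\to 0$.

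Next I would analyse the inflation map $\alpha'$. Under the identification $H^1(P)\cong\operatorname{Hom}(P,\F_p)$ the map $\alpha'$ is precomposition with the projection $P\to P/M$, so its image is exactly the set of homomorphisms $P\to\F_p$ that vanish on $M$. Every homomorphism $P\to\F_p$ vanishes on $P^p[P,P]=\Phi(P)$, and $M\leq\Phi(P)$, so $\alpha'$ is surjective. Being also injective by the exactness in Lemma \ref{lemma}, $\alpha'$ is an isomorphism, and exactness at $H^1(P)$ then forces $\beta'=0$; since $\beta'$ is surjective, $H^1(M)^P=0$.

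Finally I would invoke the same elementary fact already used in the proof of Tate's criterion above: if $M\neq 1$ then $\operatorname{Hom}(M,\F_p)$ is a nonzero $\F_p$-vector space acted on by the $p$-group $P$, hence has a nonzero fixed point, i.e. $H^1(M)^P\neq 0$. Therefore $M=K\cap P=1$. As $P\cap K$ is a Sylow $p$-subgroup of $K=O^p(N)$, this means $|O^p(N)|$ is prime to $p$; that is, $O^p(N)$ is a normal $p$-complement of $N$, so $N$ is $p$-nilpotent.

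The only genuinely substantive point — what I expect to be the main obstacle — is realising that Lemma \ref{lemma} must be fed the subgroup $O^p(N)$ rather than $N$ (for which the condition $O^p(N)=N$ generally fails) and checking that this subgroup is normal in a group having $P$ as a Sylow $p$-subgroup; once that is in place, the remaining steps are routine bookkeeping with the Frattini subgroup and the exact sequence.
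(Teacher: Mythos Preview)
Your argument is correct and matches the paper's proof essentially step for step: both apply Lemma~\ref{lemma} to $K=O^p(N)$ (the paper does so inside $PN$, you inside $PK$ or $G$, which makes no difference), use $K\cap P\leq\Phi(P)$ to get $H^1(P/(K\cap P))\cong H^1(P)$, deduce $H^1(K\cap P)^P=0$, and conclude that $K\cap P=1$ so that $O^p(N)$ is a normal $p$-complement of $N$. Your write-up simply supplies more detail for the same steps.
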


\begin{proof}
Apply the previous lemma to the group $PN$ and its normal subgroup $O^p(N)$.
By hypothesis $H^1(P/(P\cap O^p(N)))\cong H^1(P)$, and therefore $H^1(P\cap O^p(N))^P=0$. Hence $P\cap O^p(N)=1$, so $O^p(N)$
is a normal $p$-complement of $N$.
\end{proof}

\section{A $p$-solvability criterion}\label{main}

In this section we work to prove Theorem A
in the Introduction.

\medskip

We start by proving that ($1$) implies ($2$) in Theorem A,
which follows easily from Lemma \ref{lemma}. Let $G$ be a finite group
and $P\in \syl p G$. Suppose first that
$G$ is $p$-solvable of $p$-length one,
and write $M_1=O^{p'}(G)$, $L_1=O^p(M_1)$.
Observe that since $M_1$ has a normal $p$-complement and $P\subseteq M_1$, it
is clear that restriction defines an isomorphism:

$$
f: H^1(M_1)\longrightarrow H^1(P)\, .
$$

\medskip

\begin{prop}\label{hom}
Let $G$ be a finite $p$-solvable group of $p$-length $l\geq 2$, and $P$ a Sylow $p$-subgroup
of $G$. Let
$L_0=G$, $M_i=O^{p'}(L_{i-1})$ and $L_i=O^p(M_i)$ for $i\geq 1$.
Then the sequences

$$
0\longrightarrow H^1(L_{j-1}P)\stackrel{f_j}{\longrightarrow} H^1(L_{j}P)\stackrel{g_j}{\longrightarrow} H^1(M_j)^P\longrightarrow 0\, ,
$$where the maps $f_j, g_j$ are restriction maps, are exact for $2\leq j \leq l$.
In particular,
we have that $H^1(P)\cong\oplus_{i=1}^l H^1(M_i)^P$.
\end{prop}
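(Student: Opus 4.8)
The plan is to set up, for each index $j$ with $2\le j\le l$, the same inflation–restriction diagram \eqref{inf-res-tras} that underlies Tate's criterion, but applied to the group $L_{j-1}P$ in place of $G$ and to its normal subgroup $L_j=O^p(M_j)$ in place of $N$. First I would record the basic structural facts: since $G$ is $p$-solvable of $p$-length $l$, the chain $G=L_0\supseteq M_1\supseteq L_1\supseteq M_2\supseteq\cdots$ stabilises, with $L_l$ a $p'$-group (indeed $L_l=M_{l+1}=\cdots$), and each $L_j$ is normal in $G$, hence normal in $L_{j-1}P$; moreover $P\subseteq M_j$ for every $j$ because each $M_j=O^{p'}(L_{j-1})$ contains a full Sylow $p$-subgroup of $L_{j-1}$, which we may take to be $P$. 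Since $L_j=O^p(M_j)$ we have $O^p(L_j)=L_j$, so $H^1(L_j)=0$; this is exactly the hypothesis needed to invoke Lemma \ref{lemma}.

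Next, apply Lemma \ref{lemma} with the ambient group $L_{j-1}P$ and normal subgroup $L_j$ (note $N=O^p(N)$ is satisfied and $L_jP=M_jP=M_j$, since $P\le M_j$ and $M_j/L_j$ is a $p$-group so $M_j=L_jP$). The lemma yields the short exact sequence
$$
0\longrightarrow H^1\bigl((L_{j-1}P)/(L_j\cap (L_{j-1}P))\bigr)\longrightarrow H^1(L_{j-1}P)\longrightarrow H^1(L_j\cap(L_{j-1}P))^P\longrightarrow 0.
$$
Wait — I need the middle term to be $H^1(L_jP)$ and the quotient term to be $H^1(M_j)^P$, so the correct bookkeeping is to take the ambient group to be $L_{j-1}P$, its normal subgroup $O^p$ of the relevant piece, and identify $M_j=N\cap P$-type intersections. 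Concretely: with $G'=L_{j-1}P$ and $N'=L_j$, Lemma \ref{lemma} gives exactness of
$$
0\longrightarrow H^1(G'/N')\longrightarrow H^1(G')\longrightarrow H^1(N'\cap P)^P\longrightarrow 0,
$$
and then one checks $N'\cap P=L_j\cap P$, and that $H^1(L_j\cap P)^P\cong H^1(M_j)^P$ because $M_j=O^{p'}(L_{j-1})$ differs from $L_j$ by a $p$-group on top and by a $p'$-group the map on $H^1(-,\mathbb F_p)$ collapses appropriately; identifying these carefully is the routine-but-fiddly part. The maps $f_j,g_j$ are restriction maps by construction of the inflation–restriction sequence, and $G'/N'=(L_{j-1}P)/L_j$ has $H^1$ identified with $H^1(L_{j-1}P)$ only after noting $L_{j-1}P/L_jP$ — here care is needed to match $H^1(L_{j-1}P)\hookrightarrow H^1(L_jP)$ rather than the reverse; this is handled because $L_j\subseteq L_{j-1}$, so $L_jP\subseteq L_{j-1}P$ and restriction goes the right way.

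The main obstacle I anticipate is precisely this identification of the outer terms of the exact sequence with the clean objects $H^1(L_{j-1}P)$ and $H^1(M_j)^P$ named in the statement — i.e., checking that $O^{p'}$ and $O^p$ operations interact with $H^1(-,\mathbb F_p)$ and with the fixed-point functor $(-)^P$ exactly as the indexing suggests, and that the restriction maps compose correctly down the chain. Once all $l-1$ short exact sequences are in place, the final claim $H^1(P)\cong\bigoplus_{i=1}^l H^1(M_i)^P$ follows by a downward induction on $j$: the base case $j=l$ gives $H^1(L_lP)=H^1(P)$ (since $L_l$ is a $p'$-group, $L_lP/L_l\cong P$, and one uses the $p$-length-one observation $H^1(M_l)\cong H^1(P)$ made just before the proposition, together with $H^1(L_{l-1}P)\hookrightarrow H^1(L_lP)$); splicing successive short exact sequences (all of $\mathbb F_p$-vector spaces, hence all split) then telescopes to give $H^1(L_1P)\cong\bigoplus_{i=2}^l H^1(M_i)^P$, and finally the $p$-length-one case $H^1(P)=H^1(M_1P)\cong H^1(M_1)^P\oplus H^1(L_1P)$ completes the direct sum decomposition. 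I would present the induction explicitly, since that is where the $\bigoplus$ is actually assembled.
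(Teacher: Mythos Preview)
There is a genuine gap: you are misquoting Lemma~\ref{lemma}. The lemma's output is a sequence in the cohomology of the \emph{Sylow subgroup}, namely
\[
0\longrightarrow H^1(P/M)\longrightarrow H^1(P)\longrightarrow H^1(M)^P\longrightarrow 0,
\]
with $M=N\cap P$; it never produces $H^1(G')$ in the middle. When you apply it to $G'=L_{j-1}P$ with $N'=L_j$ you obtain
\[
0\longrightarrow H^1\bigl(P/(P\cap L_j)\bigr)\longrightarrow H^1(P)\longrightarrow H^1(P\cap L_j)^P\longrightarrow 0,
\]
which for $j<l$ has the wrong middle term. Your proposed remedy, the identification $H^1(L_j\cap P)^P\cong H^1(M_j)^P$, is false: already for $l=2$ (after killing the maximal normal $p'$-subgroup) one has $L_2=1$, so $H^1(L_2\cap P)^P=0$, whereas $M_2=P\cap L_1$ is a non-trivial $p$-group and $H^1(M_2)^P\neq 0$. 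The hybrid sequence $0\to H^1(G'/N')\to H^1(G')\to H^1(N'\cap P)^P\to 0$ you write down is neither Lemma~\ref{lemma} nor the inflation--restriction sequence, and indeed fails here because $H^1(N')=H^1(L_j)=0$ forces $H^1(G'/N')\cong H^1(G')$, so the restriction $H^1(G')\to H^1(N'\cap P)^P$ is the zero map.

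The paper avoids this by induction on $l$: only the sequence for $j=l$ is obtained directly from Lemma~\ref{lemma} (applied to $L_{l-1}P$, which has $p$-length two, so that $H^1(L_lP)\cong H^1(P)$ and the middle term is correct), while the remaining sequences $2\le j\le l-1$ are pulled back from the inductive hypothesis on $G/L_{l-1}$, using that $H^1(L_jP)\cong H^1(L_jP/L_{l-1})$ since $O^p(L_{l-1})=L_{l-1}$. If you prefer a direct, non-inductive argument, the fix is to apply Lemma~\ref{lemma} not to $L_{j-1}P$ but to the quotient $L_{j-1}P/L_j$ with normal subgroup $L_{j-1}/L_j$: the Sylow $p$-subgroup is then $\bar P=PL_j/L_j\cong P/(P\cap L_j)$, whose $H^1$ is canonically $H^1(L_jP)$; the intersection $\bar M=(P\cap L_{j-1})L_j/L_j\cong M_j/L_j$ has $H^1(\bar M)^{\bar P}\cong H^1(M_j)^P$; and $\bar P/\bar M\cong P/(P\cap L_{j-1})$ has $H^1$ equal to $H^1(L_{j-1}P)$. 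Your splicing argument for the final direct-sum statement is fine once the exact sequences are correctly in hand.
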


\begin{proof} Since any normal subgroup of $G$ of order coprime to $p$ lies in the kernel
of any homomorphism from $G$ into $\mathbb{F}_p$, we can assume that $G$ has no
non-trivial normal $\color{red} p'$-subgroups. We shall prove the result by induction on $l$.

\medskip

Suppose first that $G$ has $p$-length $l=2$, and note that $M_1=L_{1}P$ and $M_2=P\cap L_1$ in this case.
%By the proposition in \cite{Tate}, we have that
%the inclusion
%$M_2\subseteq P$ defines an isomorphism
%$$
%\frac{M_2\cap L_{1}^p[M_1, L_{1}]}{M_2^p[P,M_2]}\cong\frac{P\cap M_1^p[M_1,M_1]}{P^p[P,P]}\, .
%$$ Since $L_1$ has no proper quotient groups of order a power of $p$, it is clear that
%$M_2\cap L_{1}^p[M_1, L_{1}]=M_2$. Note that
%the restriction map
%$$
%f_2: H^1(M_1)\longrightarrow H^1(P)
%$$ is injective, because $P$ has index
%coprime to $p$ in $M_1$.
%Also, observe that $H^1(M_2/M_2^p[P,M_2])$ is precisely
%$H^1(M_2)^P$.
It is clear that $H^1(M_1)\cong H^1(P/M_2)$, so Lemma \ref{lemma} implies that the sequence

$$
0\longrightarrow H^1(M_{1})\stackrel{f_2}{\longrightarrow} H^1(P)\stackrel{g_2}{\longrightarrow} H^1(M_2)^P\longrightarrow 0
$$ is exact, as wanted.

\medskip

We now assume that the result holds for $p$-solvable groups
of $p$-length at most $l-1\geq 2$.
Arguing as in the previous paragraph
with the group $L_{l-1}P$ of $p$-length $2$, we obtain that
the sequence

$$
0\longrightarrow H^1(L_{l-1}P)\stackrel{f_l}{\longrightarrow} H^1(P)\stackrel{g_l}{\longrightarrow} H^1(M_l)^P\longrightarrow 0
$$ is exact. Now, applying the inductive hypothesis on the group $G/L_{l-1}$ we obtain $l-2$
exact sequences, which are easily seen to be the ones needed to complete the proof of the first statement of the proposition.
The statement on the isomorphism of the first cohomology group of $P$ follows directly from this.
\end{proof}

Now it is easy to complete the proof of Theorem A.

\begin{theo}
Let $G$ be a finite group, $p$ a prime number and $P$ a Sylow $p$-subgroup of $G$. Write
$M_1=O^{p^\prime} (G)$ and $M_i=O^{p^\prime} (O^p(M_{i-1}))$ for $i\geq 2$. Then the following two conditions are equivalent:
\begin{enumerate}
\item $G$ is $p$-solvable.\label{cond1}
\item $H^1(P)\cong \oplus_{i\in \N} H^1(M_i)^P$.\label{cond2}
\end{enumerate}
\end{theo}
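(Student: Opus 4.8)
The direction $(1)\Rightarrow(2)$ is essentially done: if $G$ is $p$-solvable, then after passing to the quotient by $O_{p'}(G)$ (which does not affect any $H^1$ with $\mathbb{F}_p$-coefficients, since $p'$-subgroups map trivially to $\mathbb{F}_p$), $G$ has finite $p$-length $l$, and Proposition \ref{hom} yields $H^1(P)\cong\oplus_{i=1}^l H^1(M_i)^P$; for $i>l$ one checks $M_i=M_{i+1}$, so $O^p(M_i)=M_i$ for large $i$, hence $H^1(M_i)=0$ and the infinite direct sum collapses to the finite one. (One must also treat the $p$-length one case separately, as the author already did before Proposition \ref{hom}.) The real work is $(2)\Rightarrow(1)$, and here the plan is to argue by contradiction: suppose $G$ is a counterexample of minimal order.

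The strategy for $(2)\Rightarrow(1)$ is to telescope Lemma \ref{lemma} along the chain $G=L_0\supseteq M_1\supseteq L_1\supseteq M_2\supseteq\cdots$, where $M_i=O^{p'}(L_{i-1})$ and $L_i=O^p(M_i)$. Applying Lemma \ref{lemma} to the group $M_i P$ (equivalently $L_{i-1}P$) with its normal subgroup $L_i=O^p(M_i)$, and using that $H^1(M_i)\cong H^1(L_{i-1}P/(P\cap L_i))$, one gets short exact sequences $0\to H^1(L_{i-1}P)\to H^1(L_iP)\to H^1(P\cap L_i)^P\to 0$ that splice into $H^1(P)\cong\bigl(\oplus_{i=1}^n H^1(M_i)^P\bigr)\oplus H^1(L_nP)$ for every $n$, exactly as in Proposition \ref{hom} but now \emph{without} assuming $p$-solvability. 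Comparing with hypothesis (2), a dimension count forces $H^1(L_nP)=0$ for $n$ large; since the chain $L_0\supseteq L_1\supseteq\cdots$ stabilizes at some $L_\infty$ with $O^{p'}(L_\infty)=L_\infty=O^p(L_\infty)$ (i.e. $H^1(L_\infty)=H^1(L_\infty P)=0$ only if $P\cap L_\infty=1$, by the fixed-point argument used in the proof of Tate's criterion), one concludes $P\cap L_\infty=1$, so $L_\infty$ has a normal $p$-complement, hence is a $p'$-group, hence $L_\infty=1$. Thus the descending chain reaches $1$ through alternating $O^{p'}$ and $O^p$ steps, which is precisely a normal series witnessing $p$-solvability of $G$.

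The main obstacle is making the splicing rigorous \emph{in the absence of the $p$-solvability hypothesis}: in Proposition \ref{hom} the inductive step quietly used that $G/L_{l-1}$ is again $p$-solvable of smaller $p$-length, whereas here I only know that the analogous quotients satisfy a cohomological identity, and I must check that hypothesis (2) is inherited by the relevant sections (e.g. by $M_1=O^{p'}(G)$ and by $L_{i-1}P/(\text{something})$) so that the induction or the minimal-counterexample reduction actually closes. Concretely, the delicate point is to verify that if $P\cap L_n\neq 1$ then $H^1(P\cap L_n)^P\neq 0$ contributes a genuinely new summand not already accounted for among the $H^1(M_i)^P$, forcing a strict dimension inequality $\dim H^1(P)>\sum_i\dim H^1(M_i)^P$ and contradicting (2). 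This is where the injectivity of restriction maps on $H^1$ for $p'$-index subgroups, together with the fact that a nontrivial $p$-group acting on a nontrivial $\mathbb{F}_p$-vector space has nonzero fixed points, must be combined carefully; once that is in place, the equivalence follows.
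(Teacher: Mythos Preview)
Your overall strategy is close to the paper's, but you miss the key simplification and make an error in the remainder term.

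First, the decomposition you write is incorrect: telescoping the sequences together with one application of Lemma~\ref{lemma} (with $N=L_n$) gives
\[
\dim H^1(P)=\sum_{i=1}^n \dim H^1(M_i)^P+\dim H^1(P\cap L_n)^P,
\]
not $\cdots+\dim H^1(L_nP)$. Indeed $H^1(L_nP)\cong H^1(P/(P\cap L_n))$ is the \emph{left} term in Lemma~\ref{lemma}, and its vanishing would mean $P\subseteq L_n$, not $P\cap L_n=1$. With the correct remainder $H^1(P\cap L_n)^P$, hypothesis (2) forces this to vanish for $n$ large, giving $P\cap L_n=1$ and hence $M_t=1$ as you want.

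Second, the ``main obstacle'' you flag --- that Proposition~\ref{hom} was proved only under $p$-solvability --- is real, but your proposed workaround (checking that hypothesis~(2) is inherited by sections, or arguing that $H^1(P\cap L_n)^P$ is a ``genuinely new summand'') is the hard way. The paper's observation dissolves the obstacle entirely: by construction of the chain, the quotient $G/M_t$ \emph{is} $p$-solvable, so Proposition~\ref{hom} applies to it directly. Since $O^p(M_t)=M_t$ (from $M_{t+1}=M_t$), inflation gives $H^1(M_i/M_t)^{\overline P}\cong H^1(M_i)^P$ for each $i$, and Proposition~\ref{hom} on $G/M_t$ yields $\dim H^1(P/(P\cap M_t))=\sum_i\dim H^1(M_i)^P$. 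A single application of Lemma~\ref{lemma} with $N=M_t$ then gives
\[
\dim H^1(P)=\dim H^1(P/(P\cap M_t))+\dim H^1(P\cap M_t)^P,
\]
and comparing with hypothesis~(2) forces $H^1(P\cap M_t)^P=0$, hence $P\cap M_t=1$, hence $M_t=1$. No telescoping in the non-$p$-solvable case, no inheritance check, and no minimal counterexample are needed.
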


\begin{proof}
By Proposition \ref{hom}, it only remains to prove that (\ref{cond2}) implies (\ref{cond1}). 
Write $M_0=G$,  so $\{M_i\}_{i\geq 0}$ is a filtration of $G$ that stabilizes at some point, say $t$:

$$M_t=M_{t+1}=M_{t+2}=\ldots$$ 
Observe that $G$ is $p$-solvable if and only if 
$M_t=1$. By the choice of $t$, it is clear that $O^p(M_t)=M_t$. 
Therefore,  by Lemma \ref{lemma} the following sequence is exact:
$$0\longrightarrow H^1(P/M_t\cap P)\longrightarrow H^1(P)\longrightarrow H^1(P\cap M_t)^P\longrightarrow 0.$$
By Proposition \ref{hom}, condition (\ref{cond2}) in the statement and the fact that $P/(M_t\cap P)\cong PM_t/M_t$ 
is a Sylow $p$-subgroup of the $p$-solvable group $G/M_t$, it follows that $H^1(P/M_t\cap P)\cong H^1(P)$. 
Thus $H^1(P\cap M_t)^P=0$, which implies that $P\cap M_t=1$, and this can only occur if $M_t=1$.
\end{proof}

\section{Generalized $p$-length and $p$-perfect groups}\label{last}

In this section we extend some ideas used to prove Theorem A, and give
a proof of Theorem B in the Introduction. At the end of the section, we also propose a conjecture
on
a bound for the generalized $p$-length of a finite group. 

\medskip

Recall that a finite group $G$ is perfect if $H_1(G,\Z)=0$.
Since $H_1(G,\Z_p)$ is the Sylow $p$-subgroup of $H_1(G,\Z)$, it follows that $G$ is perfect if and only if 
the homology group $H_1(G,\Z_p)$ is trivial  for all primes $p$. 
A group satisfying  any  of the following equivalent 
properties 
is called a \textbf{$p$-perfect group}:

\begin{enumerate}
\item $H_1(G,\Z_p)=0$,
\item $H_1(G,\F_p)=0$,
\item $H^1(G,C_{p^\infty})=0$,
\item $H^1(G,\F_p)=0$,
\item $O^p(G)=G$.
\end{enumerate}
We say that a series of normal subgroups $\{N_i\}_{i=0}^r$ of $G$  
is a $p$-perfect filtration if $N_0=G$, 
$N_r=1$, $N_i\leq N_j$ for all $i\geq j$, and for all $i>1$,  the group $N_i$ is $p$-perfect. We define the  \textbf{$p$-perfect length}  of 
$\{N_i\}_{i=0}^r$ as the number of factors $N_{i}/N_{i+1}$ such that $p$ divides $|N_{i+1}:N_i|$. 
We describe the main properties of $p$-perfect filtrations in the following proposition:

\begin{prop}
Let $G$ be a finite $p$-group, $P$ a Sylow $p$-subgroup and $\{N_i\}_{i=0}^r$ a $p$-perfect filtration. Write $M_i=N_i\cap P$  for all i. Then 
\begin{enumerate}
\item  For all $i\geq j$, the restriction map
$$\text{res}^{M_j}_{M_i}:H_1(M_i)_P\longrightarrow H_1(M_j)_P$$
is injective, and it is an isomorphism if and only if $p$ does not divide $|N_j:N_i|$.
\item   For all $i\geq j$, the restriction map
$$\text{res}^{M_j}_{M_i}:H^1(M_j)^P\longrightarrow H^1(M_i)^P$$
is surjective, and it is an isomorphism if and only if $p$ does not divide $|N_j:N_i|$.
\end{enumerate}
\end{prop}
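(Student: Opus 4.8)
The plan is to establish (2) and then read off (1) by duality. For $M\nor P$ one has natural identifications $H^1(M)^P=\operatorname{Hom}(M/M^p[P,M],\F_p)$ and $H_1(M)_P=M/M^p[P,M]$, under which $\operatorname{res}^{M_j}_{M_i}$ on cohomology and the inclusion-induced map $H_1(M_i)_P\to H_1(M_j)_P$ become linear maps of finite-dimensional $\F_p$-spaces that are transpose of one another (note each $M_i=N_i\cap P$ is normal in $P$, since $P$ normalises $N_i\nor G$). As injectivity, surjectivity and bijectivity of a linear map correspond under transposition to surjectivity, injectivity and bijectivity, conditions (1) and (2) are equivalent, so it suffices to prove (2).

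The basic tool is the following consequence of Lemma \ref{lemma}: if $N_k$ is $p$-perfect then $N_k=O^p(N_k)$, so applying the lemma to $G$ and its normal subgroup $N_k$ (with $M_k=N_k\cap P\in\syl{p}{N_k}$) shows that
$$0\longrightarrow H^1(P/M_k)\stackrel{\operatorname{inf}}{\longrightarrow} H^1(P)\stackrel{\operatorname{res}}{\longrightarrow} H^1(M_k)^P\longrightarrow 0$$
is exact; for $k=0$ this holds trivially as $M_0=P$. Fixing $i\geq j$ and juxtaposing these sequences for $k=j$ and $k=i$ gives a commutative ladder whose middle column is $\operatorname{id}_{H^1(P)}$, whose left column is the inflation $H^1(P/M_j)\to H^1(P/M_i)$ induced by $P/M_i\twoheadrightarrow P/M_j$ (injective, since composing it with $H^1(P/M_i)\to H^1(P)$ yields the injective inflation $H^1(P/M_j)\to H^1(P)$), and whose right column is $\operatorname{res}^{M_j}_{M_i}$; commutativity is transitivity of inflation and of restriction. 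The snake lemma then gives at once that $\operatorname{res}^{M_j}_{M_i}$ is surjective and that $\ker\operatorname{res}^{M_j}_{M_i}\cong H^1(P/M_i)/\operatorname{inf}H^1(P/M_j)$.

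It remains to show this cokernel vanishes precisely when $p\nmid|N_j:N_i|$. Transposing, it vanishes iff every homomorphism $P/M_i\to\F_p$ kills $M_j/M_i$, i.e.\ iff $M_j/M_i\leq\Phi(P/M_i)$, i.e.\ iff $M_j\leq\Phi(P)M_i$. If $p\nmid|N_j:N_i|$ the $p$-parts of $|N_i|$ and $|N_j|$ coincide, so $|M_i|=|M_j|$, hence $M_i=M_j$ and the Frattini condition holds. Conversely, if $p\mid|N_j:N_i|$ then $M_i\subsetneq M_j$; assume toward a contradiction that $M_j\leq\Phi(P)M_i$ and pass to $\bar G=G/N_i$, writing $\bar P=PN_i/N_i\in\syl{p}{\bar G}$ and $\bar N=N_j/N_i\nor\bar G$. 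By the modular law $\bar P\cap\bar N=M_jN_i/N_i$, which under $\bar P\cong P/M_i$ corresponds to $M_j/M_i\leq\Phi(P)M_i/M_i=\Phi(\bar P)$. Corollary \ref{Tate} then makes $\bar N$ $p$-nilpotent; but $\bar N=N_j/N_i$ is a quotient of the $p$-perfect group $N_j$, so $O^p(\bar N)=\bar N$, while its Sylow $p$-subgroup $M_jN_i/N_i\cong M_j/M_i$ is non-trivial, and a non-trivial $p$-nilpotent group has a non-trivial quotient of $p$-power order — a contradiction. Hence $\operatorname{res}^{M_j}_{M_i}$ is not an isomorphism, and (2) follows.

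The step I expect to cost the most care is the interface with the low-index terms: the displayed short exact sequence is supplied by Lemma \ref{lemma} only for $p$-perfect $N_k$, which the definition grants for $k\geq2$ and which is vacuous for $k=0$ (where $M_0=P$), but for $k=1$ one must invoke the concrete shape of the filtrations in play — for those produced in the proof of Theorem A one has $N_1=O^{p'}(G)\supseteq P$, so $M_1=P$ and the $k=1$ sequence is again trivially exact, after which the snake-lemma argument runs uniformly over all pairs $i\geq j$. The conceptual crux, though, is the nontrivial (index-divisible) half of the last paragraph, where the combinatorial condition "the cokernel is zero" is rephrased as the Frattini condition $M_j\leq\Phi(P)M_i$ and then knocked out by Corollary \ref{Tate}; everything else is formal.
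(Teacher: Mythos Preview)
Your argument is correct and shares the paper's skeleton (Lemma~\ref{lemma} plus duality), but the execution diverges in two places, and the paper's route is a bit more economical.

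For surjectivity the paper uses only the exact sequence at $k=i$: since $H_1(M_i)_P\to H_1(P)$ is injective and factors as $H_1(M_i)_P\to H_1(M_j)_P\to H_1(P)$, the first arrow is injective. This needs only $N_i$ to be $p$-perfect, whereas your snake-lemma ladder also consumes the exact sequence at $k=j$ and hence the $p$-perfectness of $N_j$ (or the trivial case $j=0$).

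For the ``isomorphism $\Rightarrow p\nmid|N_j:N_i|$'' direction the paper does \emph{not} pass to $G/N_i$ and invoke Corollary~\ref{Tate}. Instead it applies Lemma~\ref{lemma} a second time, now with $N_j$ as the ambient group, $M_j\in\syl p{N_j}$, and the $p$-perfect normal subgroup $N_i\nor N_j$, obtaining the short exact sequence of $P$-modules
\[
0\longrightarrow H_1(M_i)_{M_j}\longrightarrow H_1(M_j)\longrightarrow H_1(M_j/M_i)\longrightarrow 0;
\]
right-exactness of $P$-coinvariants then yields $H_1(M_i)_P\to H_1(M_j)_P\to H_1(M_j/M_i)_P\to 0$, so an isomorphism on the left forces $H_1(M_j/M_i)_P=0$, hence $M_i=M_j$. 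Again only the $p$-perfectness of $N_i$ is used. Your Frattini--Tate argument is valid but needs $N_j$ (not just $N_i$) to be $p$-perfect to produce the contradiction, which is precisely the boundary issue you flag in your last paragraph; the paper's second application of Lemma~\ref{lemma} inside $N_j$ sidesteps that entirely and treats all $j\le i$ uniformly.
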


\begin{proof}
Notice that the second statement follows directly from the first one by duality. Let us prove first the injectivity
of the map on the homology groups. Since $N_i$ is $p$-perfect, by Lemma  \ref{lemma} we have the following exact sequence
 \begin{equation}
 \xymatrix{
  0 \ar[r] & H_1(M_i)_{P} \ar[r] &H_1(P)\ar[r]  & H_1(P/M_i)\ar[r]& 0.
}
\end{equation}  In particular, $H_1(M_i)_{P}\to H_1(P)$ is injective.
 Now,  recall that the later arrows factor through $H_1(M_j)_{P}$. That is,
 \begin{equation}
 \xymatrix{H_1(M_i)_{P} \ar[r] &H_1(M_j)_P\ar[r]  & H_1(P).
}
\end{equation}
This shows that the arrow $H_1(M_i)_P\to H_1(M_j)_P$  is injective.

Suppose now that $H_1(M_i)_P\to H_1(M_j)_P$ is an isomorphism. By Lemma  \ref{lemma} applied to the group $N_j$,
 we have the following exact sequence of $P$-modules:
\begin{equation}
 \xymatrix{
  0 \ar[r] & H_1(M_i)_{M_j} \ar[r] &H_1(M_j)\ar[r]  & H_1(M_j/M_i)\ar[r]& 0.
}
\end{equation}
Since taking co-invariants is right exact, the following sequence is exact:
\begin{equation}
 \xymatrix{
 H_1(M_i)_{P} \ar[r] &H_1(M_j)_{P}\ar[r]  & H_1(M_j/M_i)_{P}\ar[r]& 0.
}
\end{equation}
In particular, since $H_1(M_i)_P\to H_1(M_j)_P$ is an isomorphism, we have that $ H_1(M_j/M_i)_{P}=0$. This implies that $M_i=M_j$ and therefore $p$ does not divide $|N_j:N_i|$. The converse is clear.
\end{proof}

The following corollary is straightforward.

\begin{cor}
\label{cor_perfect}
Let $G$ be a finite $p$-group, $P$ a Sylow $p$-subgroup of $G$ and $\{N_i\}_{i=0}^r$ a $p$-perfect filtration. Then the $p$-perfect length of $\{N_i\}_{i=0}^r$ is at most the number of generators of $P$.
\end{cor}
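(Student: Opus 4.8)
The plan is to convert the second statement of the previous proposition into a chain of surjections on first cohomology and then count $\F_p$-dimensions. Set $M_i=N_i\cap P$ for all $i$, as in the proposition. At the two ends of the filtration we have $M_0=N_0\cap P=P$, so that $H^1(M_0)^P=H^1(P)$ (the natural action of $P$ on $H^1(P)=\mathrm{Hom}(P,\F_p)$ is trivial, since a homomorphism into the abelian group $\F_p$ factors through $P/\Phi(P)$, on which inner automorphisms act trivially), and $M_r=N_r\cap P=1$, so that $H^1(M_r)^P=0$. By the Burnside basis theorem, $\dim_{\F_p}H^1(P)=\dim_{\F_p}(P/\Phi(P))$ equals the number $d$ of generators of $P$.

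Next I would look at the chain of restriction maps
$$
H^1(P)=H^1(M_0)^P\longrightarrow H^1(M_1)^P\longrightarrow\cdots\longrightarrow H^1(M_r)^P=0 .
$$
By part (2) of the proposition, each arrow $H^1(M_j)^P\to H^1(M_{j+1})^P$ is surjective, and it is an isomorphism exactly when $p$ does not divide $|N_j:N_{j+1}|$. Consequently, at every index $j$ for which $p$ divides the order of $N_j/N_{j+1}$ the corresponding arrow is a surjection that is not injective, and hence $\dim_{\F_p}H^1(M_{j+1})^P\le\dim_{\F_p}H^1(M_j)^P-1$; at the remaining indices the two dimensions coincide. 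Writing $a_j=\dim_{\F_p}H^1(M_j)^P$ we thus have $a_0=d$, $a_r=0$, and $a_j-a_{j+1}\ge 1$ at precisely the $l$ indices that contribute to the $p$-perfect length of $\{N_i\}_{i=0}^r$. Telescoping,
$$
d=a_0-a_r=\sum_{j=0}^{r-1}(a_j-a_{j+1})\ge l ,
$$
which is the desired inequality.

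I do not expect a genuine obstacle: the whole content of the statement is carried by the previous proposition, and what is left is essentially bookkeeping. The three points that each deserve a line are the identification $H^1(M_0)^P=H^1(P)$ together with $\dim_{\F_p}H^1(P)=d$ via Burnside's basis theorem, the elementary fact that a non-injective surjection of finite-dimensional $\F_p$-vector spaces lowers dimension by at least one, and the observation that the indices $j$ with $p\mid|N_j:N_{j+1}|$ are exactly those counted by the $p$-perfect length, which is how that length was defined. One could equally run the dual argument using statement (1), via the chain $0=H_1(M_r)_P\hookrightarrow\cdots\hookrightarrow H_1(M_0)_P=H_1(P)\cong P/\Phi(P)$.
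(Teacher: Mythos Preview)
Your argument is correct and is precisely the content the paper has in mind: the paper's proof is the single sentence that the number of generators of $P$ equals $\dim_{\F_p} H_1(P)$ and that the corollary is then ``clear from the previous proposition,'' i.e., exactly the dimension-drop bookkeeping you spelled out. The only cosmetic difference is that the paper phrases it via the homological chain $0=H_1(M_r)_P\hookrightarrow\cdots\hookrightarrow H_1(M_0)_P=H_1(P)$ from part (1), which you yourself note at the end is the dual of your cohomological argument.
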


\begin{proof}
Recall that the number of generators of $P$ is the dimension of $H_1(P,\F_p)$. Then the corollary is clear from the previous proposition.
\end{proof}

Next we present a natural way of constructing $p$-perfect filtrations.

\begin{ex}
\label{example_perfect}
Let $G$ be a finite group and $p$ a fixed prime. Write $N_0=G$ and let 
$M_1$ be the $p$-solvable residue of $G$, that is $M_1$ is the smallest
normal subgroup of $G$ such that $G/M_1$ is $p$-solvable. Consider the 
normal series $\{N_j\}_{j=0}^{s_1}$ of $G$ such that $N_0=G$, $N_j=O^p(O^{p^\prime}(N_{j-1}))$ for $j\geq 1$, 
and $N_{s_1}=M$. If $N_{s_1}\neq 1$, next we take $N_{s_1+1}$ a normal subgroup of $G$ such that $N_{s_1}/N_{s_1+1}$ is a chief factor of $G$. 
Notice that by construction $N_{s_1}/N_{s_1+1}$ is non-$p$-solvable, so in particular $p$ divides $|N_{s_1}:N_{s_1+1}|$. Now we take $M_2$ the $p$-solvable residue of $N_{s_1+1}$. Observe that since $M_2$ is characteristic in $N_{s_1+1}$, it is normal in $G$. Then we can proceed to refine the filtration $N_{s_1+1}\supseteq M_2$ as above. 
In this way, and avoiding repetitions if they occur, 
we construct a filtration $\{N_j\}_{j=0}^r$, where $r\geq s_1$, with the following properties:
\begin{itemize}
\item[1.] For all $j$, $N_j/N_{j+1}$ is either a non-$p$-solvable chief factor of $G$ or a $p$-nilpotent group.
\item[2.]  For all $j$, if $N_j/N_{j+1}$ is a 
non-$p$-solvable chief factor, then $N_j$ is $p$-perfect.
\item[3.]  For all $j$, if $N_j/N_{j+1}$ is $p$-nilpotent, 
then $N_{j+1}$ is $p$-perfect.
\end{itemize}
In order to obtain the desired $p$-perfect filtration $\{J_i\}_{i=0}^{t}$ of $G$, 
we can just take the (ordered) subset of $\{N_j\}_{j=0}^r$ formed by the subgroups
$N_j$ such that either $N_{j-1}/N_j$ is $p$-nilpotent, or $N_j/N_{j+1}$ is non-$p$-solvable, 
together with $J_0=G$ and the trivial subgroup $J_t=1$. Of course, it is no loss to assume that we do not have repetitions
in the series. 
\end{ex}

It is clear from the definition given in the Introduction 
that if $G$ is $p$-solvable, then
the $p$-length of $G$ coincides with the generalized $p$-length of $G$.
On the other hand, if $G$ has no $p$-solvable chief factors then the generalized $p$-length of $G$ is just the number of chief factors of
$G$ of order divisible by $p$ that appear in any composition series of $G$. 

\medskip

Now we are ready to prove Theorem B.

\begin{theo}
Let $G$ be a finite group and $P$ a Sylow $p$-subgroup of $G$.
Then the minimal number of generators of $P$ is greater or equal to the $p$-length of $G$.
\end{theo}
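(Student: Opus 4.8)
The plan is to feed the $p$-perfect filtration of Example~\ref{example_perfect} into Corollary~\ref{cor_perfect}, and then to refine it into a normal series of the shape used to define the $p$-length. First I would fix $p$ and $P\in\syl{p}{G}$, write $d$ for the number of generators of $P$ and $l$ for the $p$-length of $G$, and recall that $d=\dim_{\F_p}H_1(P,\F_p)$. Applying Example~\ref{example_perfect} to $G$ yields a $p$-perfect filtration $\{J_i\}_{i=0}^{t}$ of $G$ in which, by construction, every factor $J_i/J_{i+1}$ is either a non-$p$-solvable chief factor of $G$, or a $p$-solvable group of $p$-length at most one (the $O^{p}O^{p'}$-type steps), or a non-$p$-solvable chief factor of $G$ lying on top of a $p$-solvable group of $p$-length at most one. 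Corollary~\ref{cor_perfect} then gives that the $p$-perfect length of $\{J_i\}_{i=0}^{t}$ --- the number of indices $i$ with $p\mid[J_i:J_{i+1}]$ --- is at most $d$.

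The next step is to refine $\{J_i\}_{i=0}^{t}$ factor by factor. In a factor equal to a $p$-solvable group $H$ of $p$-length at most one I would insert into $J_i$ the preimages of the characteristic subgroups $O_{p'}(H)\leq O_{p',p}(H)\leq H$; the three successive quotients are then a $p'$-group, a $p$-group and a $p'$-group, so exactly one $p$-group quotient appears, and it is nontrivial precisely when $p\mid|H|$. In a factor of the third kind I would first insert the term separating the chief factor on top from the $p$-length-one group $H$ below, and then refine $H$ as above. Since all the subgroups used are characteristic in $J_i/J_{i+1}$ while $J_i\nor G$ and $J_{i+1}\nor G$, every inserted subgroup is normal in $G$. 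After deleting repetitions this produces a normal series of $G$ all of whose factors are $p$-groups, $p'$-groups or non-$p$-solvable chief factors of $G$, in which each original factor $J_i/J_{i+1}$ contributes at most one $p$-group factor, and contributes one only when $p\mid[J_i:J_{i+1}]$. Therefore the number of $p$-group factors of this series is at most the $p$-perfect length of $\{J_i\}_{i=0}^{t}$, hence at most $d$; as $l$ is by definition the least such number over all normal series of this shape, we conclude $l\leq d$.

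The delicate point, and the one I would check most carefully, is the claim that a single factor $J_i/J_{i+1}$ contributes at most one $p$-group factor after refinement. This rests on the fact that the filtration of Example~\ref{example_perfect} collapses each maximal run of consecutive $O^{p}O^{p'}$-steps into a single factor, so that no $J_i/J_{i+1}$ hides two independent ``$p$-levels''. If one is careless here the argument only yields $l\leq 2d$ --- which is exactly the bound that survives for the generalized $p$-length, where the non-$p$-solvable chief factors (all of order divisible by $p$) must be counted as well. So I would return to the construction in Example~\ref{example_perfect}, confirm that consecutive retained subgroups $J_i\supseteq J_{i+1}$ bound precisely one of the three types listed above, and verify case by case that the $p$-group quotient produced by the refinement is nontrivial only when $p$ divides $[J_i:J_{i+1}]$.
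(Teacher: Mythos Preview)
Your approach is essentially the paper's own: apply Corollary~\ref{cor_perfect} to the filtration $\{J_i\}$ of Example~\ref{example_perfect} and then observe that its $p$-perfect length bounds the $p$-length of $G$ from above; the paper dismisses this last step as ``clear'' while you spell out the refinement explicitly. One small correction to your reading of Example~\ref{example_perfect}: the construction does \emph{not} collapse a run of consecutive $O^pO^{p'}$-steps into a single factor---each such step survives as its own $J_i/J_{i+1}$---but since every factor of $\{J_i\}$ nonetheless has $p$-length at most one (it is either a non-$p$-solvable chief factor, a group of the form $p$-by-$p'$, or a non-$p$-solvable chief factor sitting over such a group), your refinement argument goes through unchanged once you make this adjustment.
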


\begin{proof}
Consider the filtration $\{J_i\}_{i=0}^t$ of $G$ constructed in Example \ref{example_perfect}. 
The theorem now follows from Corollary \ref{cor_perfect},  because the $p$-perfect $p$-length
of $\{J_i\}_{i=0}^t$ is clearly greater or equal than the $p$-length of $G$.
\end{proof}

It is also easy to deduce from Example \ref{example_perfect}  the following result due to Huppert for odd primes.

\begin{theo}
Let $G$ be a finite group and $P$ a Sylow $p$-subgroup of $G$.
Then the minimal number of generators of $P$ is greater or equal to the non-$p$-solvable $p$-length of $G$.
\end{theo}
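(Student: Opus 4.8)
The plan is to mimic the proof of Theorem B, simply keeping track of a different subset of the filtration constructed in Example \ref{example_perfect}. Recall that $P$ has exactly $d=\dim_{\F_p} H_1(P,\F_p)$ generators, and that Corollary \ref{cor_perfect} says that for \emph{any} $p$-perfect filtration of $G$, the number of factors of order divisible by $p$ is at most $d$. So the entire argument reduces to exhibiting a $p$-perfect filtration of $G$ in which the number of $p$-divisible factors is at least the non-$p$-solvable length of $G$, i.e.\ at least the number of non-$p$-solvable chief factors occurring in a chief series of $G$.

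First I would invoke Example \ref{example_perfect} to produce the filtration $\{J_i\}_{i=0}^t$. By its construction, every factor $J_i/J_{i+1}$ is either a $p$-nilpotent group or a non-$p$-solvable chief factor of $G$; and property~2 of that construction guarantees that whenever $J_i/J_{i+1}$ is a non-$p$-solvable chief factor, the larger term $J_i$ is $p$-perfect, so $\{J_i\}_{i=0}^t$ is indeed a $p$-perfect filtration. Next I would observe that each non-$p$-solvable chief factor $J_i/J_{i+1}$ has order divisible by $p$ (a non-$p$-solvable group is certainly non-trivial and not a $p'$-group), so every such factor is counted in the $p$-perfect length of $\{J_i\}_{i=0}^t$. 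Finally, refining a chief series through the filtration, the non-$p$-solvable chief factors of $G$ are, up to $G$-isomorphism, exactly the non-$p$-solvable factors appearing among the $J_i/J_{i+1}$; hence the number of non-$p$-solvable chief factors of $G$ — the non-$p$-solvable $p$-length of $G$ — is at most the $p$-perfect length of $\{J_i\}_{i=0}^t$, which by Corollary \ref{cor_perfect} is at most $d$.

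The only genuinely delicate point is the last one: one must be careful that the count of non-$p$-solvable chief factors is well defined (independent of the chosen chief series) and that it really is realized inside the filtration $\{J_i\}_{i=0}^t$. This is essentially the content of the Jordan–Hölder theorem applied relative to the normal series $\{J_i\}$: any chief series of $G$ can be chosen to pass through all the $J_i$, and then a non-$p$-solvable chief factor of $G$ sits inside exactly one factor $J_i/J_{i+1}$, which is therefore itself non-$p$-solvable and hence (by property~1 of Example \ref{example_perfect}) is itself that chief factor. Conversely each non-$p$-solvable $J_i/J_{i+1}$ is such a chief factor. So the main obstacle is bookkeeping rather than any new idea, and once it is settled the result drops out of Corollary \ref{cor_perfect} exactly as Theorem B did.

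I would therefore write the proof as: \textit{Consider the filtration $\{J_i\}_{i=0}^t$ of $G$ constructed in Example \ref{example_perfect}. By construction it is a $p$-perfect filtration, and each of its factors that is a non-$p$-solvable chief factor of $G$ has order divisible by $p$; since these factors are precisely the non-$p$-solvable chief factors appearing in a chief series of $G$, the $p$-perfect length of $\{J_i\}_{i=0}^t$ is at least the non-$p$-solvable $p$-length of $G$. The claim now follows from Corollary \ref{cor_perfect}.}
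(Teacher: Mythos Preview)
Your approach is correct and matches the paper's: the paper does not actually write out a proof but simply remarks that the result ``is easy to deduce from Example \ref{example_perfect}'', and your deduction via Corollary \ref{cor_perfect} is exactly that. One small correction: a factor $J_i/J_{i+1}$ containing a non-$p$-solvable chief factor need not itself \emph{equal} that chief factor---property~1 in Example \ref{example_perfect} refers to the finer series $\{N_j\}$, and in passing to $\{J_i\}$ the term $N_{s_k+1}$ just below a non-$p$-solvable step is typically omitted, so $J_i/J_{i+1}$ can be a non-$p$-solvable chief factor sitting over a $p$-nilpotent piece---but this does not affect your count, since each $N_{s_k}$ does lie in $\{J_i\}$ and the $J$-factor with top $N_{s_k}$ still has order divisible by $p$.
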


After proving this two results, it arises the question whether one could combine both in a more general bound, that is, 
whether the number of generators of the Sylow $p$-subgroups bounds the generalize $p$-length
of a finite group. Unfortunatelly this is not the case for the Schur cover of $S_5$.

\begin{ex}
Let $G$ be a Schur cover of $S_5$. The generalized $2$-length of $G$ is $3$, but the Sylow $2$-subgroup of $G$ is isomorphic to a Schur cover of $D_8$, which can be generated by $2$ elements.
\end{ex}

In any case, it seems that this situation is quite particular, 
and one should expect the following question to be true.

\begin{que}
Is it true that for all, but a finite number of primes, the generalized $p$-length of a finite group is bounded by the number of generators of its Sylow $p$-subgroups?
\end{que}

In order to  give an answer to this question, 
and working with the filtration constructed in example \ref{example_perfect},
one needs to study sequences of $G$-groups:
\begin{equation}
\label{eq-4}
 \xymatrix{
 1\ar[r] &M \ar[r] &\tilde{H}\ar[r]  & H\ar[r]& 1,
}
\end{equation}
where $H$ is a direct product of copies of a finite simple non-abelian group, $M$ is a simple $\F_p[H]$-module and $G$ is a finite group into which
$\tilde{H}$ is embedded as a normal subgroup.
Write $Q$ for a Sylow $p$-subgroup of $G$ and let $P=Q\cap \tilde{H}$. Under these circumstances,
it would be interesting to know when at least one of the following two properties holds:
\begin{enumerate}
\item The sequence
$$
\label{eq-5}
 \xymatrix{
  0 \ar[r] & H^1(P/M) \ar[r] &H^1(P)\ar[r] & H^1(M)^P\ar[r]& 0.
}
$$
is exact.
\item $\dim (H^1(P/M)^Q)\geq 2$.
\end{enumerate}

For instance, if  \eqref{eq-4} is split, an easy argument shows 
that the transgression map $H^1(M)^P\to H^2(P/M)$ is the null map,
and condition $1$ holds. 

\end{document}